\newcommand{\vertiii}[1]{{\left\vert\kern-0.25ex\left\vert\kern-0.25ex\left\vert #1
    \right\vert\kern-0.25ex\right\vert\kern-0.25ex\right\vert}}
\theoremstyle{plain}
\newcommand*{\rom}[1]{\expandafter\@slowromancap\romannumeral #1@}
\newtheorem*{thm*}{Theorem}
\subjclass{}%
\keywords{}%
\date{\today}%
\dedicatory{}%
\title{Norm bounds on Eisenstein series.}
\author{Dubi Kelmer}
\thanks{Kelmer is partially supported by NSF CAREER grant DMS-1651563.}
\email{kelmer@bc.edu}
\address{Department of Mathematics, Boston College, Boston, Massachusetts, United States}
\author{Alex Kontorovich}
\thanks{Kontorovich is partially supported by NSF grant DMS-1802119 and BSF grant 2020119.}
\email{alex.kontorovich@rutgers.edu}
\address{Department of Mathematics, Rutgers University, New Brunswick, New Jersey, United States}
\author{Christopher Lutsko}
\email{chris.lutsko@rutgers.edu}
\address{Department of Mathematics, Rutgers University, New Brunswick, New Jersey, United States} 
\date{July 2023}
\begin{document}

\maketitle
\begin{abstract}
\noindent    
We study the sup-norm bound (both individually and on average) for Eisenstein series on certain arithmetic hyperbolic orbifolds producing sharp exponents for the modular surface and Picard 3-fold. The methods involve bounds for Epstein zeta functions, and counting restricted values of indefinite quadratic forms at integer points.
\end{abstract}













\section{Introduction}

For a compact Riemannian manifold, $X$, one can show that if $\phi\in L^2(X)$ has $\|\phi\|_2=1$ and 
is an eigenfunction of the Laplace-Beltrami operator with
eigenvalue $\lambda$, then $\|\phi\|_{\infty}\ll \lambda^{\frac{\dim(X)-1}{4} }$, see \cite[Cor 2.2]{SeegerSogge1989}. This bound, usually referred to as the convexity bound, is sharp in general. However, when $X$ has negative curvature, it is believed that this exponent can be improved (a log savings is obtained by B\'erard \cite{Berard1977}) and there are some results of this nature for cusp forms on some arithmetic hyperbolic manifolds. Explicitly, for arithmetic hyperbolic surfaces it is conjectured that $\|\phi\|_\infty\ll_\epsilon\lambda^\epsilon$, and it was shown in  \cite{IwaniecSarnak1995} that $\|\phi\|_\infty\ll_\epsilon \lambda^{5/24+\epsilon}$  for $\phi$ a Hecke-Maass cusp form. 
In higher dimensions, the situation is more complicated, as it was shown in the work Rudnick and Sarnak \cite{RudnickSarnak1994} and in more detail by Mili\'{c}evi\'{c} \cite{Milicevic2011}, that for any $
\epsilon>0$, there exists Hecke-Maass forms on a given arithmetic hyperbolic $3$-manifold for which $\|\phi\|_\infty\gg \lambda^{1/4-\e}$.  Nevertheless, a subconvex upper bound of order  $\lambda^{5/12+\epsilon}$ was proved in \cite{Koyama1995, BlomerHarcosMilicevic2016}, so the 
truth is somewhere in between.


This paper is concerned with the analogous problem where the cusp form is replaced by an Eisenstein series. Explicitly, given a non-uniform lattice, $\Gamma$, acting on hyperbolic $n+1$ space $\bH^{n+1}$, for each cusp $\xi$, let 
$E_{\Gamma,\xi}(s,z)$ denote the Eisenstein series corresponding to this cusp with normalization such that the constant term of the Fourier expansion based at $\xi$ is of the form $y^s + c_\xi(s) y^{n-s}$. Then $E_{\Gamma,\xi}(\tfrac{n}{2}+it,z)$ is an almost-$L^2$ eigenfunction of the Laplacian with eigenvalue $\lambda=\tfrac{n^2}{4}+t^2$. Since the Eisenstein series is unbounded as $z$ moves into the cusp, in order to consider the supremum norm, we need to restrict to a compact set. We define the parameter
$\nu_\infty=\nu_\infty(\Gamma)$ as the infimum over all $\nu>0$ such that for any compact set $\Omega\subseteq \bH^{n+1}$,  and any cusp $\xi$,  we have 
$$\sup_{z\in\Omega}\ |E_{\Gamma,\xi}(\tfrac{n}{2}+it,z)|\ \ll_\Omega \  |t|^\nu.$$

\begin{rem}\label{rmk:Enormalize}
    There is another natural way to normalize the Eisenstein series; instead of ensuring that the constant term is of the form $y^s+c_\xi(s)y^{n-s}$, one could $L^2$-normalize (locally in $\Omega$, since Eisenstein series are not $L^2$), that is,  ask that 
    $$
    \int_{\Omega}\left|E_{\G,\xi}\left(
    \frac n2 + it, z\right)\right|^2  dz= 1,
    $$
where $dz$ is the hyperbolic volume form.
When $\Gamma$ is arithmetic, the two normalizations are not too different, but in the nonarithmetic setting, very little is known about the arising discrepancies.
\end{rem}

For numerous applications,
it suffices to understand sup norm bounds on average;
towards this, we
consider 
the 
quantity
$\nu_2=\nu_2(\Gamma)$ defined as the infimum of all $\nu>0$ such that for 
any compact set $\Omega\subseteq \mathbb{H}^{n+1}$, and any cusp $\xi$, we have
$$
\sup_{z\in\Omega}\int_{-T}^{T}
|E_{\Gamma,\xi}(\tfrac{n}{2}+it,z)|^2dt \ \ll_\Omega \ T^{1+2\nu}.$$
Then clearly $\nu_2\leq \nu_\infty$ but one expects that we can give a sharper bound for $\nu_2$.

\begin{rem}\label{rmk:appl}
    One example of an application is as follows. Given a rational quadratic form $Q$ of signature $(n+1,1)$,  the number $N(X)$ of primitive integer points $v\in\Z^{n+2}$ on the light cone $Q=0$ of norm bounded by $X$ can be estimated precisely in terms of $\nu_2=\nu_2(\SO_Q(\Z))$, see \cite[Prop 3.5]{KelmerYu2022} showing that 
    $$
    N(X) = c X^n + O(X^{n\left(1-\frac 1{2(\nu_2+1)}\right)}).
    $$
For another application, see \cite{BurrinNevoRuhrWeiss2020}.    
\end{rem}


\subsection{The case $\G=\SL_2(\Z)$}\

The main result of this paper is a determination of $\nu_2$ for the modular group. 
\begin{thm}\label{thm:1}
For $\Gamma=\SL_2(\Z)$, 
we have that $\nu_2(\Gamma)=0$.
That is, for any compact set $\Omega\subset\G\bk \bH$, there is a constant $c=c(\Omega)$ such that for  all $z\in\Omega$ and all $T\geq 1$,
\begin{eqnarray}\label{eq:thm1a}
    \int_{-T}^{T} |E_{\G}(z,\tfrac{1}{2}+it)|^2dt \ \leq \ cT\log^4(T). 
\end{eqnarray}
Here $E_\G$ is the Eisenstein series at the (unique) cusp at $\infty$.
\end{thm}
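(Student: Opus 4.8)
The plan is to identify $E_\G(z,s)$ with a normalized Epstein zeta function and to reduce \eqref{eq:thm1a} to a sharp second moment bound for the latter. Writing $z=x+iy$, let $Q_z(v)=|mz+n|^2/y$ for $v=(m,n)$ denote the associated determinant-one positive definite binary form, and set $Z_{Q_z}(s)=\sum_{v\in\Z^2\setminus\{0\}}Q_z(v)^{-s}$. Separating the greatest common divisor of the coordinates gives the exact identity $E_\G(z,s)=Z_{Q_z}(s)/(2\zeta(2s))$. On the line $s=\tfrac12+it$ one has the classical bound $|\zeta(1+2it)|^{-1}\ll\log(2+|t|)$, so that $|\zeta(2s)|^{-2}\ll\log^2(2+|t|)$; since the height $y$ is bounded on the compact set $\Omega$, it therefore suffices to prove, uniformly for $z\in\Omega$, the estimate $\int_{-T}^{T}|Z_{Q_z}(\tfrac12+it)|^2\,dt\ll_\Omega T\log^2 T$, after which pulling the maximal factor $\log^2 T$ out of the integral yields \eqref{eq:thm1a}.

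First I would open up the second moment using the approximate functional equation for the Epstein zeta function. From the functional equation $\pi^{-s}\Gamma(s)Z_{Q_z}(s)=\pi^{-(1-s)}\Gamma(1-s)Z_{Q_z}(1-s)$, whose analytic conductor is of size $\asymp t^2$, one expresses $Z_{Q_z}(\tfrac12+it)$ as a smoothly truncated sum of $Q_z(v)^{-1/2-it}$ over lattice points with $Q_z(v)\ll|t|$, together with a dual sum of the same shape carrying a unimodular factor. Squaring and integrating in $t$, the leading contribution comes from the diagonal $Q_z(v)=Q_z(v')$; using that the number of $v$ with $Q_z(v)\asymp R$ is $\asymp R$ (as $\det Q_z=1$), this is $\ll T\sum_{0<Q_z(v)\le T}Q_z(v)^{-1}$, which ranges from $\asymp T\log T$ for a generic form up to $\asymp T\log^2 T$ at the arithmetic (CM) points of $\Omega$, where $Q_z$ represents integers with many repetitions. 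This diagonal is the analogue of the contribution of the unimodular constant term of $E_\G$.

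The hard part, and what I expect to be the main obstacle, is the off-diagonal, where $Q_z(v)\ne Q_z(v')$ but the two values are close. Carrying out the $t$-integral weights each such pair by $\min\bigl(T,\ Q_z(v)/|Q_z(v)-Q_z(v')|\bigr)$, so the problem becomes one of counting pairs of integer vectors with $Q_z(v),Q_z(v')\le N$ for which the indefinite quaternary form $R(v,v')=Q_z(v)-Q_z(v')$, of signature $(2,2)$, takes small values --- that is, counting the restricted values taken by an indefinite quadratic form at integer points. Concretely, one must show
\[
\sum_{\substack{v,v'\in\Z^2\setminus\{0\}\\ Q_z(v),Q_z(v')\le N}}\frac{1}{\sqrt{Q_z(v)Q_z(v')}}\,\min\!\Bigl(T,\ \frac{Q_z(v)}{|Q_z(v)-Q_z(v')|}\Bigr)\ \ll_\Omega\ T\log^2 T,
\]
with losses that are only logarithmic rather than of size $T^\e$, and uniformly as $Q_z$ ranges over the compact family attached to $z\in\Omega$. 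Achieving this log-sharp, uniform count on the level sets of $R$ --- where a crude lattice-point bound would give only the far weaker $T^{1+\e}$ --- is the crux of the argument; once it is in hand, the cross terms coming from the dual sum in the approximate functional equation follow from the same counting input combined with the oscillation of the unimodular factor.
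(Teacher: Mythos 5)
Your proposal follows the same route as the paper: the identity $\zeta(2s)E_\G(s,z)=y^sZ_{Q_z}(s)$, the lower bound $\zeta(1+2it)\gg 1/\log t$ contributing the extra $\log^2$, the approximate functional equation truncating at $Q_z(v)\ll |t|$, and the dyadic decomposition in both the size of $v$ and the gap $|Q_z(v)-Q_z(v')|$, culminating in a count of integer points where the signature $(2,2)$ form $\tilde Q(v,v')=Q_z(v)-Q_z(v')$ lands in a short interval. All of these reductions are carried out in the paper essentially as you describe (the paper uses the Savastru--Varbanets form of the approximate functional equation, with an $O(\log|t|)$ error that is harmless).

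The genuine gap is that the step you yourself flag as the crux is left entirely unproved: the uniform, log-sharp bound
$\#\{(v,v')\in\Z^4:\|(v,v')\|\le A,\ |\tilde Q(v,v')|\le B\}\ll BA^{2}\log A$,
with the constant uniform over $z\in\Omega$ and, critically, with the dependence on the length $B$ of the target interval made \emph{linear and explicit}. This is not available off the shelf: the Eskin--Margulis--Mozes theorem gives the count for a \emph{fixed} interval $(a,b)$ with an implicit dependence on that interval, whereas here $B\asymp A^2 B'/T$ varies with the dyadic parameters. The paper devotes Section 2 to exactly this uniformization: one chooses a bump function $f$ equal to $2$ on $[1,2]\times[-1,1]^{n-2}\times[-B,B]$ so that $J_f\ge 2$ on the relevant set, covers its support by $O(B)$ unit-scale balls so that Schmidt's inequality gives $\hat f(g)\ll B\,\alpha(g\Z^n)$ for the Siegel transform, and then invokes the EMM bounds on $\int_K\alpha(a_tkg\Z^n)\,dm(k)$. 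Note also that signature $(2,2)$ is one of the exceptional EMM cases where that integral grows like $t$, which is the source of the extra $\log A$ above; your target $T\log^2 T$ for the Epstein second moment is consistent with this, but without supplying (or citing a uniform version of) this counting input, the argument is incomplete at its central point.
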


\begin{remark}
    For $\Gamma\leq \SL_2(\Z)$ a congruence subgroup, it is believed that $\nu_2(\Gamma)=\nu_\infty(\Gamma)=0$.    
    We expect that modifications of our techniques would show that $\nu_2(\Gamma)=0$ also for congruence subgroups of $\SL_2(\Z)$.
    For $\nu_\infty$, the convexity bound is $\nu_\infty(\Gamma)\leq \tfrac{1}{2}$, and the work of Young and consequently Huang  \cite{Young2018, Huang2019} using amplification gives the sub-convex bound of $\nu_\infty(\Gamma)\leq \tfrac{3}{8}$ while the best result to date is due to Blomer \cite{Blomer2020} who used the approximate functional equation and Burgess' bound to prove that $\nu_\infty(\SL_2(\Z))\leq \tfrac{1}{3}$. 
\end{remark}


\begin{rem}\label{rmk:PhilSar}
We note that for a general non-arithmetic lattice acting on $\bH^2$, 
nothing is known about $\nu_\infty(\Gamma)$.
For $\nu_2(\Gamma)$,
it is likely that the known convexity bound $\nu_2(\Gamma)\leq \tfrac{1}{2}$ is actually sharp (with the caveat that our normalization is not $L^2$, but rather that which arises in the pre-trace formula; see Remark \ref{rmk:Enormalize}).  As evidence for this, we show that if $\nu_2(\Gamma,z)<\tfrac{1}{2}$ at some point $z\in \bH^2$, then $\Gamma$ must have infinitely many cusp forms, in contrast with the Phillips-Sarnak conjecture \cite{PhillipsSarnak1985}, see Theorem \ref{thm:PhilSar} below and the discussion in Section \ref{sec:PhilSar}.
(Here we denote by $\nu_2(\Gamma,z)$ the exponent for a fixed point $z$.)
Again,
this might not necessarily signal the existence of  a large 
jump in the size of the Eisenstein series near the single point $z$, but could rather come from the discrepancy with $L^2$ normalization.
\end{rem}

The bulk of the paper is devoted to proving Theorem \ref{thm:1}. A key ingredient of independent interest is obtaining strong bounds on the Epstein zeta function  (see \S\ref{sec:Epstein}).

\subsection{The case $\G=\SL_2(\Z[i])$} \

For the Picard group $\G=\SL_2(\Z[i])$
acting on hyperbolic 3-space, one can resolve the issue completely. 
Using
Blomer's upper bounds \cite{Blomer2020} 
on Epstein zeta function,
and connections between such and Eisenstein series 
(which are well-known for $\SL_2(\Z)$ and derived 
here for $\Gamma = \SL_2(\Z[i])$),
 one obtains
the following. 
 \begin{thm}\label{thm:1b}
For $\Gamma=\SL_2(\Z[i])$, 
any compact set $\Omega\subset\G\bk \bH^3$,
and for any $\epsilon>0$, 
there is a constant $c=c(\Omega,\epsilon)$ such that
\begin{eqnarray}\label{eq:thm1b}
\sup_{z\in \Omega} \ |E_{\Gamma}(z,1+iT)|\ \leq \  cT^{1/2+\e} .    
\end{eqnarray}
As a consequence, we have
that 
\begin{eqnarray}\label{eq:nu2SL2Zi}
\nu_2(\SL_2(\Z[i])) = \nu_\infty(\SL_2(\Z[i])) =  1/2  .  
\end{eqnarray}
\end{thm}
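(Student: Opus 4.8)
The plan is to reduce everything to the Epstein zeta function of a quaternary quadratic form and then invoke Blomer's bound. Writing a point of $\bH^3$ in upper–half–space coordinates as $z = u + rj$ with $u \in \mathbb{C}$ and $r > 0$, the height transforms under $\gamma = \left(\begin{smallmatrix} * & * \\ c & d\end{smallmatrix}\right) \in \Gamma$ by $r(\gamma z) = r/(|cu+d|^2 + |c|^2 r^2)$, so the Eisenstein series at the cusp $\infty$ is
$$E_\Gamma(z, s) = \sum_{\gamma \in \Gamma_\infty \backslash \Gamma} r(\gamma z)^s = r^s \sum_{(c,d)} Q_z(c,d)^{-s},$$
where $Q_z(c,d) := |cu+d|^2 + |c|^2 r^2$ is a positive-definite real quaternary form in $(\Re c, \Im c, \Re d, \Im d)$ and $(c,d)$ runs over coprime pairs in $\Z[i]^2$ modulo units. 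Factoring out the (ideal) gcd exactly as in the classical $\SL_2(\Z)$ computation, this restricted sum differs from the full Epstein zeta $Z(s; Q_z) = \sum_{(c,d) \neq 0} Q_z(c,d)^{-s}$ by a factor of the Dedekind zeta function of the Gaussian field, yielding
$$\zeta_{\mathbb{Q}(i)}(s)\, E_\Gamma(z,s) = c_0\, r^s\, Z(s; Q_z)$$
for an explicit constant $c_0$. First I would establish this identity carefully, tracking the Gaussian units and the coprimality condition; this is the part ``derived here.''

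For the upper bound \eqref{eq:thm1b}, observe that as $z$ ranges over a compact $\Omega$ the forms $Q_z$ range over a compact family of positive-definite quaternary forms with determinant bounded above and below. The quaternary Epstein zeta satisfies a functional equation under $s \mapsto 2-s$ with a single $\Gamma$-factor, hence behaves like a degree-two $L$-function with analytic conductor $\asymp |t|^2$ on the line $\Re(s) = 1$; the convexity bound there is $|t|^{1/2+\epsilon}$. Blomer's theorem \cite{Blomer2020} supplies exactly such a bound, uniformly over the compact family, so $Z(1 + it; Q_z) \ll_{\Omega,\epsilon} |t|^{1/2+\epsilon}$. Combining this with the standard lower bound $|\zeta_{\mathbb{Q}(i)}(1+it)| \gg_\epsilon |t|^{-\epsilon}$ from the zero-free region, the identity above gives $\sup_{z \in \Omega} |E_\Gamma(z, 1+it)| \ll_{\Omega,\epsilon} |t|^{1/2+\epsilon}$. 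In particular $\nu_\infty(\Gamma) \leq 1/2$, and since always $\nu_2 \leq \nu_\infty$, also $\nu_2(\Gamma) \leq 1/2$.

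It remains to prove the matching lower bounds $\nu_\infty, \nu_2 \geq 1/2$, for which it suffices to exhibit one point and one compact neighbourhood violating every exponent below $1/2$. I would take $z_0 = j$ (that is, $u=0$, $r=1$), for which $Q_{z_0}(c,d) = |c|^2 + |d|^2$ is the sum-of-four-squares form. By Jacobi's formula its Epstein zeta factors,
$$Z(s; Q_{z_0}) = \sum_{m \geq 1} r_4(m)\, m^{-s} = 8\,(1 - 4^{1-s})\,\zeta(s)\,\zeta(s-1),$$
so that $\zeta_{\mathbb{Q}(i)}(s) = \zeta(s)\,L(s, \chi_{-4})$ cancels the $\zeta(s)$ and, at $s = 1 + it$, the identity leaves
$$E_\Gamma(z_0, 1+it) = 8\,c_0\,(1 - 4^{-it})\,\frac{\zeta(it)}{L(1+it, \chi_{-4})}.$$
The functional equation forces $|\zeta(it)| \asymp |t|^{1/2}$ up to logarithmic factors, while $|L(1+it,\chi_{-4})| \asymp 1$. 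Hence $|E_\Gamma(z_0, 1+it)| \gg |t|^{1/2 - \epsilon}$ for $t$ avoiding the isolated zeros of $1 - 4^{-it}$, giving $\nu_\infty \geq 1/2$. Squaring and integrating, the factor $|1 - 4^{-it}|^2 = 2 - 2\cos(t \log 4)$ has mean $2$, so $\int_{-T}^T |E_\Gamma(z_0, 1+it)|^2\, dt \gg \int_0^T |\zeta(it)|^2\, dt \gg T^{2-\epsilon}$, giving $\nu_2 \geq 1/2$. Together with the upper bound this yields $\nu_2(\Gamma) = \nu_\infty(\Gamma) = 1/2$, which is \eqref{eq:nu2SL2Zi}.

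The main obstacle I anticipate is twofold. First, deriving the Epstein--Eisenstein identity for $\SL_2(\Z[i])$ with the correct normalizing factor $\zeta_{\mathbb{Q}(i)}$ demands care with the units $\{\pm 1, \pm i\}$ and the gcd bookkeeping, rather than being a black box as for $\SL_2(\Z)$. Second, and more substantively, one must apply Blomer's bound \emph{uniformly} over $\{Q_z : z \in \Omega\}$, with all implied constants controlled in terms of the shape and determinant of $Q_z$. The pointwise input here is genuinely the convexity bound, which the sum-of-four-squares computation shows to be \emph{sharp} at the reducible point $z_0$, where $Z(1+it;Q_{z_0})$ inherits the full $|t|^{1/2}$ growth of $\zeta(it)$; thus no subconvex improvement is available and the exponent $1/2$ is the truth.
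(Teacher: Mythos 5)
Your proof is correct, and for the upper bound \eqref{eq:thm1b} it follows essentially the same route as the paper: unfold the coprimality condition against $\zeta_{\Q(i)}(s)=\zeta(s)L(s,\chi_{-4})$ to write $\zeta(s)L(s,\chi_{-4})E_\Gamma(s,z)=y^sZ_{Q_z}(s)$ for the quaternary form $Q_z(c,d)=|cu+d|^2+|c|^2y^2$, then apply Blomer's pointwise bound $Z_{Q}(1+it)\ll_{Q,\epsilon}|t|^{1/2+\epsilon}$ (the $m=4$ case of Theorem \ref{thm:Blomer}) together with $|\zeta_{\Q(i)}(1+it)|^{-1}\ll |t|^{\epsilon}$. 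The one place you genuinely diverge is the lower bound needed for the equalities in \eqref{eq:nu2SL2Zi}: the paper simply cites the special-point evaluations of Kelmer--Yu (see Remark \ref{Rmk:KY}), whereas you carry this out explicitly at $z_0=j$, where $Q_{z_0}$ is the sum of four squares, so that Jacobi's formula gives $Z_{Q_{z_0}}(s)=8(1-4^{1-s})\zeta(s)\zeta(s-1)$ and hence $E_\Gamma(j,1+it)=8(1-4^{-it})\zeta(it)/L(1+it,\chi_{-4})$; the functional equation then yields $|\zeta(it)|\asymp|t|^{1/2}|\zeta(1+it)|$ and both $\nu_\infty\geq 1/2$ (along $t$ avoiding the zeros of $1-4^{-it}$) and $\nu_2\geq 1/2$ (after averaging the oscillating factor). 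This makes the sharpness self-contained rather than outsourced, which is a nice addition; it is exactly the mechanism underlying the cited reference. Two small points of care that both you and the paper elide but that you at least flag: Blomer's bound must be applied uniformly as $Q_z$ ranges over the compact family $\{Q_z:z\in\Omega\}$, and the explicit unit/gcd bookkeeping over $\Z[i]$ must be tracked to pin down the constant $c_0$ --- neither affects the result.
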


We note that the upper bound for $\nu_\infty$ implied by \eqref{eq:thm1b} matches 
 a lower bound for $\nu_2$ in work of Kelmer-Yu \cite{KelmerYu2022} (see Remark \ref{Rmk:KY}), leading to \eqref{eq:nu2SL2Zi}.

\begin{remark}
We similarly expect that Theorem \ref{thm:1b} can be proved along the same lines for $\Gamma=\SL_2(\cO_K)$ 
(or
a congruence subgroup thereof)
with $K$ an imaginary quadratic field. 
When $\Gamma$ is a congruence lattice in $\SL_2(\C)$, the convexity sup norm bound is $\nu_\infty(\G)\leq 1$, it is commonly believed that
$\nu_\infty(\G)=\tfrac{1}{2}$, and the best currently known bound comes from the work of Assing \cite{Assing2019} who used amplification to show that $\nu_\infty(\G)\leq \tfrac{7}{8}$ (his result is more general and deals with Eisenstein series for $SL_2$ defined over general number fields). 
\end{remark}
\begin{remark}\label{Rmk:KY}    
For lattices acting on hyperbolic $(n+1)$-space,
even the ``convexity'' bound 
$\nu_\infty(\Gamma)\leq \tfrac{n}{2}$ 
is not known in general, not even for 
general arithmetic lattices.
Nevertheless, in \cite{KelmerYu2022},
this  convexity bound was proved  for a large family of arithmetic lattices. While it may be expected that $\nu_\infty=\tfrac{n-1}{2}$, there are no known subconvex bounds when $n\geq 3$.    
(Note that with a different normalization -- see Remark \ref{rmk:Enormalize} -- an argument due to Sarnak \cite{SarnakToMorawetz} shows that the $L^\infty$ 
norm is controlled by the $L^2$ norm on compacta, thus proving the convexity bound with this other normalization.)

For sup norm bounds on average, the convexity bound $\nu_2(\Gamma)\leq \tfrac{n}{2}$ is known to hold for a general non-uniform lattice \cite[Cor 7.7]{CohenSarnak1980}.  Note that when $n\in \{1,2,3,5,7\}$, for certain arithmetic lattices, it is possible to evaluate $E(s,z_0)$ at special points as a product of zeta functions and $L$-functions (see \cite{KelmerYu2022}), from which one can conclude that $\nu_2(\Gamma,z_0)=\tfrac{n-1}{2}$ at these points. This gives a lower bound on what can be expected to hold in general. 

\end{remark}

\subsection{Mean square bounds on Epstein zeta functions}\label{sec:Epstein}\ 

Both 
Theorems \ref{thm:1}
and \ref{thm:1b}
are derived (in section \ref{s:Eis}) from the following bounds on Epstein zeta functions. Given a positive-definite quadratic form $Q$ in $m$ variables, let
\begin{align*}
    Z_Q(s) := \sum_{v\in \Z^m \setminus 0} Q(v)^{-s} 
\end{align*}
denote the Epstein zeta function, defined originally in some half-plane $\Re(s)\gg1,$ and having a well-known meromorphic continuation to $s\in\C$ (see \cite{Terras1973}) and functional equation relating $Z_Q(s)$ to $Z_{Q_-}(\frac m2-s)$, where $Q_-$ is the dual form. That is, if $Q$ is given by $Q(x) = x^T Z x$, then $Q_-(x) = x^T Z^{-1}x$.

\begin{thm}\label{thm:Eps bounds}
    Let $Q$ be a positive-definite quadratic form  in $m$ variables, and let the associated Epstein zeta function be $Z_Q$. If $m=2$ then 
    \begin{align}\label{eps m=2}
        \int_T^{2T} |Z_Q(\frac{1}{2}+it)|^2 dt \ \ll_Q\  T \log^2(T).
    \end{align}
    Moreover, if $m \ge 3$, then for any $\varepsilon>0$,
    \begin{align}\label{e:Zeta m}
        \int_T^{2T}  \left|Z_Q\left(\frac {m}4+it\right)\right|^2 dt \  \ll_Q \ T^{m/2+\varepsilon}.
    \end{align}
    In either case, the implicit constants depend on the form $Q$, 
    but may be taken uniform as $Q$ varies in a compact set in the space of positive-definite quadratic forms.
\end{thm}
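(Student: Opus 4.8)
The plan is to treat both cases uniformly through an approximate functional equation, reducing the mean square to that of a Dirichlet polynomial whose diagonal gives the main term and whose off-diagonal is controlled by counting near-coincidences among the values $Q(v)$, $v\in\Z^m$. Write $Z_Q(s)=\sum_n r_Q(n)n^{-s}$ with $r_Q(n)=\#\{v\in\Z^m\setminus 0:Q(v)=n\}$. Applying the Mellin transform to the theta series $\theta_Q(u)=\sum_v e^{-\pi u Q(v)}$ produces a \emph{single} complex gamma factor $\pi^{-s}\Gamma(s)$ in the completed function, and the incomplete-gamma truncation transitions at $Q(v)\approx|s|\approx|t|$. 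Thus on the central line $\Re(s)=m/4$ the approximate functional equation expresses $Z_Q(\tfrac m4+it)$ as a sum over $Q(v)\ll|t|$ together with a dual sum over $Q_-(v)\ll|t|$, each of length $\sim|t|$ in $n$, hence involving $\sim|t|^{m/2}$ lattice points. By the symmetry of the functional equation it suffices to bound the mean square of the direct sum.

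Expanding the square and integrating over $t\in[T,2T]$, a pair $(u,v)$ contributes $Q(u)^{-m/4}Q(v)^{-m/4}\int_T^{2T}(Q(u)/Q(v))^{it}\,dt$, with inner integral $\ll\min\!\big(T,\,|\log(Q(v)/Q(u))|^{-1}\big)$. The exact diagonal $Q(u)=Q(v)$ contributes $\ll T\sum_{Q(v)\ll T}Q(v)^{-m/2}\ll T\log T$ for generic $Q$; for integral $Q$ one first groups equal values and uses $\sum_{n\le X}r_Q(n)^2\ll X^{m-1}$ when $m\ge 3$, respectively $\ll X\log X$ when $m=2$, which still yields $T\log T$ (resp.\ $T\log^2 T$). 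The difficulty is therefore entirely in the off-diagonal terms.

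The crux is to organize the off-diagonal dyadically by $Q(u)\sim Q(v)\sim X$ and $|Q(u)-Q(v)|\sim\delta$, so that $|\log(Q(v)/Q(u))|^{-1}\sim X/\delta$ and the block is weighted by $X^{-m/2}\min(T,X/\delta)$ times the number of admissible pairs. This pair count is
\begin{equation*}
N_Q(X,\delta):=\#\{(u,v)\in\Z^{2m}\setminus 0:\ Q(u),Q(v)\ll X,\ |Q(u)-Q(v)|\le\delta\},
\end{equation*}
i.e.\ the number of integer points at which the indefinite form $Q(u)-Q(v)$ of signature $(m,m)$ takes a restricted (small) value. The key input is a uniform upper bound of the shape $N_Q(X,\delta)\ll_Q X^{m-1}(\delta+1)$, valid for all $\delta$ and uniform as $Q$ ranges over a compact family of positive-definite forms. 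Inserting this and summing the dyadic contributions yields $\ll T^{m/2}\log^2 T$ when $m\ge 3$ and $\ll T\log^2 T$ when $m=2$; combining with the diagonal gives the stated bounds \eqref{eps m=2} and \eqref{e:Zeta m}.

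The main obstacle is establishing this indefinite-form counting bound uniformly in $Q$ and down to small $\delta$. For $\delta\ge 1$ one expects $N_Q(X,\delta)\sim X^{m-1}\delta$ by equidistribution of values of indefinite quadratic forms, but what is needed is a clean \emph{upper} bound (not an asymptotic), uniform over irrational $Q$, together with control of the range $\delta<1$, where the cap $\min(T,\cdot)$ must absorb the contribution of arbitrarily close but unequal values $Q(u),Q(v)$. For integral $Q$ one could alternatively invoke the Montgomery--Vaughan mean value theorem after grouping the equal frequencies $\log n$, since these are then spaced by $\gg 1/n$; but this approach breaks down for real $Q$, where the frequencies $\log Q(v)$ need not be well-spaced, which is precisely why the direct dyadic counting of the indefinite form is required and constitutes the technical heart of the proof.
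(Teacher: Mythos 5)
Your proposal follows essentially the same route as the paper: an approximate functional equation reduces the mean square to a Dirichlet polynomial over $Q(v)\ll |t|$, the off-diagonal is organized dyadically by size and by $|Q(u)-Q(v)|$, and everything reduces to a uniform upper bound on the number of integer points at which the signature-$(m,m)$ form $Q(u)-Q(v)$ lands in $(-\delta,\delta)$ --- which is exactly the paper's Theorem \ref{thm:EMM}, the Eskin--Margulis--Mozes count made uniform in the target interval. One correction: for $m=2$ the relevant signature is $(2,2)$, an exceptional case of that theorem, so the provable uniform bound is $\ll X^{m-1}(\delta+1)\log X$ rather than the $\ll X^{m-1}(\delta+1)$ you state; that extra logarithm (together with the dyadic sum over $\delta$) is precisely where the $\log^2 T$ in \eqref{eps m=2} comes from, so your final bound is right but your stated counting input, as written, is not what one can establish uniformly.
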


We note that the above result is only new for $m=2,3$.
While the $m=3$ case is not related to bounds on Eisenstein series, we nevertheless 
include it for its intrinsic interest.
For $m\ge 4$, \eqref{e:Zeta m} follows immediately from 
Blomer's pointwise bounds for the Epstein zeta function.
\begin{thm}[{\cite[Theorem 1]{Blomer2020}}]\label{thm:Blomer}
Let $Q$ be a positive-definite quadratic form  in $m$ variables, with $m\ge4$. Then for any $\vep>0$,
$$
\left|Z_Q\left(\frac m4 + i t\right)\right| \ll_{Q,\vep} T^{(m-2)/4+\vep}.
$$
\end{thm}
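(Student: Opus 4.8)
The plan is to save the full factor $T^{1/2}$ over the convexity bound by exploiting the recursive structure of $Z_Q$ as a lattice sum. Recall that the completed function $\pi^{-s}\Gamma(s)Z_Q(s)$ satisfies the functional equation relating $s$ to $\frac m2-s$, so the center of symmetry is $s=\frac m4$; comparing the line $\Re(s)=\frac m2+\varepsilon$ (where $Z_Q=O(1)$) with $\Re(s)=-\varepsilon$ (where Stirling and the functional equation give $Z_Q\ll|t|^{m/2}$) and interpolating by Phragm\'en--Lindel\"of yields the convexity bound $|Z_Q(\frac m4+it)|\ll|t|^{m/4+\varepsilon}$. To beat this, I would single out one coordinate: writing $v=(n,w)$ with $n\in\Z$ and $w\in\Z^{m-1}$ and completing the square, $Q(n,w)=a(n-\ell(w))^2+R(w)$ with $a>0$, $\ell$ linear, and $R$ a positive-definite form in $m-1$ variables. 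Applying Poisson summation in $n$ to the inner sum (handling $w=0$ directly) produces the Chowla--Selberg type identity
\[
Z_Q(s)=2a^{-s}\zeta(2s)+\sqrt\pi\,\frac{\Gamma(s-\tfrac12)}{\sqrt a\,\Gamma(s)}\,Z_R\!\left(s-\tfrac12\right)+B(s),
\]
where $B(s)$ is a dual sum of $K$-Bessel type.

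The two explicit terms are handled by reduction to one fewer variable. On the line $\Re(s)=\frac m4$ the factor $\zeta(2s)=\zeta(\frac m2+2it)$ is $O(1)$ as soon as $m>2$, so the first term is negligible. For the second term, Stirling gives $|\Gamma(s-\tfrac12)/\Gamma(s)|\asymp|t|^{-1/2}$, while $Z_R(s-\tfrac12)$ is evaluated at $\Re=\frac{m-2}4$, exactly $\tfrac14$ to the left of the center $\frac{m-1}4$ of the $(m-1)$-variable form $R$. Applying the functional equation for $Z_R$ (which contributes a further $|t|^{1/2}$) and then convexity for $Z_R$ at the point $\Re=\frac m4$, namely $\tfrac14$ to the right of its center, yields $Z_R(\frac{m-2}4+it)\ll|t|^{1/2+(m-2)/4+\varepsilon}$; multiplying by the $|t|^{-1/2}$ gamma ratio gives precisely $|t|^{(m-2)/4+\varepsilon}$. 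This step uses only convexity in $m-1$ variables and is available once $m-1\ge3$, i.e. $m\ge4$.

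The main obstacle is the dual sum $B(s)$, which up to constants equals
\[
\frac{2(\pi/a)^{s}}{\Gamma(s)}\sum_{k\ne 0}\sum_{w\ne 0}e^{2\pi i k\ell(w)}\,R(w)^{1/4-s/2}\,|k|^{s-1/2}K_{s-1/2}\!\left(\tfrac{2\pi|k|}{\sqrt a}\sqrt{R(w)}\right).
\]
For $s=\frac m4+it$ the uniform asymptotics of $K_{\sigma+it}$ confine the effective support to the oscillatory regime $\tfrac{2\pi|k|}{\sqrt a}\sqrt{R(w)}\ll|t|$, i.e. $R(w)\ll|t|^2/k^2$, and the factor $e^{-\pi t/2}$ in $K$ cancels the $e^{\pi t/2}$ coming from $1/\Gamma(s)$. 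Estimating $B(s)$ term by term over this range is insufficient: it produces a bound of order $|t|^{(m-1)/2}$, which exceeds convexity. One must therefore extract cancellation from the combined oscillation of the additive phase $e^{2\pi i k\ell(w)}$ and the Bessel phase (depending on $\sqrt{R(w)}$ and $t$), by applying Poisson summation, or stationary phase, in the $m-1$ remaining variables $w$, and then bounding the resulting exponential sums via lattice-point counts for the dual form. It is here, in securing enough saving from the $(m-1)$-dimensional oscillation, that the hypothesis $m\ge4$ is essential, and this dual-sum estimate is the genuine heart of the argument; the cases $m=2,3$, where too few variables remain, require the separate treatment developed elsewhere.
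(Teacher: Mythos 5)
First, a structural point: the paper does not prove Theorem \ref{thm:Blomer} at all. It is quoted verbatim from \cite[Theorem 1]{Blomer2020} and used as a black box (for $m\ge 4$ it immediately yields \eqref{e:Zeta m}, and it drives the proof of Theorem \ref{thm:1b} via the quaternary form $Q_z$), so your attempt has to be measured against Blomer's proof, not anything in this paper. That said, your bookkeeping for the explicit terms of the Chowla--Selberg/Epstein recursion is correct: the convexity exponent $m/4$, the $|t|^{-1/2}$ from $\Gamma(s-\tfrac12)/\Gamma(s)$, and convexity for the $(m-1)$-variable form $R$ at $\Re(s)=\tfrac m4$ (exponent $\tfrac{m-1}{2}-\tfrac m4=\tfrac{m-2}{4}$) do combine to give $|t|^{(m-2)/4+\varepsilon}$ for the middle term; note, though, that nothing in that computation actually requires $m-1\ge 3$ (convexity for $Z_R$ holds for every $m-1\ge 1$, and $\zeta(2s)$ is bounded on $\Re(s)=\tfrac m4$ as soon as $m>2$), so your stated explanation of where $m\ge4$ enters is not right even for the easy terms.

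The genuine gap is your treatment of the dual sum $B(s)$, which is not a proof but a restatement of the problem. You correctly observe that term-by-term estimation gives only $|t|^{(m-1)/2}$, so the entire theorem rests on extracting a saving of $|t|^{(m-1)/2-(m-2)/4}=|t|^{m/4}$ from oscillation --- and at exactly this point you stop, invoking ``Poisson summation, or stationary phase'' in the $w$-variables without executing either: no stationary points are located, no dual lattice-point count is performed, and no quantitative argument shows the attainable cancellation reaches the sharp exponent or identifies where $m\ge4$ is genuinely needed. Moreover, the one concrete mechanism you name cannot carry the argument in general: for a diagonal form $Q$ the linear form $\ell$ vanishes identically, the additive phase $e^{2\pi i k\ell(w)}$ is constant, and all cancellation must come from the Bessel phase in $t$ and $\sqrt{R(w)}$ --- a hard multidimensional van der Corput problem your sketch does not touch. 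For comparison, Blomer's actual argument runs through his approximate functional equation --- precisely the inequality this paper quotes as \cite[(2.2)]{Blomer2020} in Section \ref{s:m=3} --- reducing matters to smoothed theta-type sums $\sum_{v\neq 0}V_A(Q(v))\,Q(v)^{-it}$ of length $A\ll |t|^{1+\varepsilon}$, where the $|t|^{1/2}$ saving over convexity is then won; after Poisson summation in $w$, your $B(s)$ would lead back to exponential sums of essentially the same nature, so the recursion does not let you avoid that work (and note that this paper's own Section \ref{s:m=3} proves only the weaker mean-square bound \eqref{e:Zeta m} from (2.2) together with the counting estimate of Theorem \ref{thm:EMM}). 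As it stands, your proposal reduces the theorem to an unproved central estimate and therefore does not constitute a proof.
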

Blomer also gives 
pointwise bounds for $m=2$ and $m=3$, but these are weaker than what is needed for the $L^2$ bounds in Theorem \ref{thm:Eps bounds}.


\subsection{Values of indefinite forms at integer points}\ 

To prove Theorem \ref{thm:Eps bounds}, we require a result from the geometry of numbers, namely the following uniform version of
\cite[Theorem 2.3]{EskinMargulisMozes1998}.
\begin{thm}\label{thm:EMM}
For any $n=p+q\geq 3$ with $p\geq q\geq 1$, and any form $Q(v)$ of signature $(p,q)$ having discriminant one, there are constants $c=c(Q)$, $A_0$, and $B_0$ such that, for any $A\geq A_0$ and $B\geq B_0$, 
if $(p,q)\not\in \{(2,1),(2,2)\}$, then 
$$\#\{  v\in \Z^n: \|v\|\leq A,\; Q(v)\in (-B,B)\}\leq c BA^{n-2}.$$
while for $(p,q)\in\{(2,1),(2,2)\}$,
$$\#\{  v\in \Z^n: \|v\|\leq A,\; Q(v)\in (-B,B)\}\leq c BA^{n-2}\log(A).$$
The constant $c$ can be taken uniform for $Q$ ranging in a compact set.
\end{thm}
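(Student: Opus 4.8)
The plan is to reduce to a counting problem for a fixed compact family of lattices in the standard quadratic space, and then to follow the inductive, subspace-by-subspace strategy of Eskin--Margulis--Mozes while tracking all constants to secure uniformity. First I would standardize the form. Using that $Q$ has signature $(p,q)$, discriminant one, and ranges over a compact family, choose $g\in\mathrm{GL}_n(\mathbb R)$ with $Q(v)=Q_0(gv)$, where $Q_0=x_1^2+\cdots+x_p^2-x_{p+1}^2-\cdots-x_n^2$ is the standard form, and with $\|v\|\asymp\|gv\|$; since the eigenvalues of $Q$ are pinned in a compact range bounded away from $0$ and $\infty$, $g$ may be taken of bounded norm, and discriminant one forces $|\det g|=1$, so $\Lambda=g\mathbb Z^n$ is unimodular. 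Thus, up to replacing $A$ by a bounded multiple, the count becomes $\#\{w\in\Lambda:\ \|w\|\le A,\ |Q_0(w)|<B\}$, where $\Lambda$ ranges over a fixed compact set $\mathcal K\subset \SL_n(\mathbb R)/\SL_n(\mathbb Z)$. It therefore suffices to bound, uniformly for $\Lambda\in\mathcal K$, the number of $\Lambda$-points in the standard region $R_{A,B}=\{w:\ \|w\|\le A,\ |Q_0(w)|<B\}$.

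Second, I would separate the volume main term from the lattice-point excess. Writing $u=\|w_+\|^2$ and $s=\|w_-\|^2$ for the positive and negative parts, the volume element is a constant times $u^{(p-2)/2}s^{(q-2)/2}\,du\,ds$, and the slab $|u-s|<B$ inside the ball contributes $\asymp B\int_0^{A^2} r^{(n-4)/2}\,dr\asymp BA^{n-2}$, with no logarithm, precisely because $n\ge 3$. The count can exceed this volume only through $\Lambda$-points clustering near the light cone $\{Q_0=0\}$, that is, through rational subspaces $W\le \mathbb R^n$ on which $Q_0$ is nearly degenerate. Following Eskin--Margulis--Mozes, I would bound the count by the volume plus a sum, over such subspaces, of lower-dimensional counts of the same type, and then induct on $n$: the contribution of a subspace $W$ is governed by the count for the restricted form $Q_0|_W$, whose signature $(p',q')$ satisfies $p'+q'<n$.

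Third, the induction bottoms out at the indefinite binary case $(1,1)$, where $\#\{v:\ |v_1 v_2|<B,\ \|v\|\le A\}\asymp A+B\log A$, the logarithm coming from the hyperbola-strip sum $\sum_{v_1}B/|v_1|$ and the $A$ from the coordinate axes. Propagating this base case through the recursion, the extra logarithm survives exactly when $(p,q)\in\{(2,1),(2,2)\}$ --- the two signatures for which the summation over near-isotropic rational subspaces sits at the threshold of logarithmic divergence --- and is otherwise absorbed into $BA^{n-2}$. (In the equivalent dynamical formulation this is the statement that the orbit integral $\int_0^{\log A}\widetilde\alpha(a_t\Lambda)\,dt$ of the Eskin--Margulis--Mozes height function diverges logarithmically in $A$ precisely for these two signatures.)

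The uniformity --- the genuinely new content --- comes from the fact that $g$, and hence $\Lambda$, ranges over the compact set $\mathcal K$, so the relevant geometry-of-numbers estimates can be made uniform: the height function and the implied constants in the counting inequality are uniformly bounded and uniformly integrable over $\mathcal K$ by continuity and compactness, and $A_0,B_0$ may be chosen from the same data. I expect the main obstacle to be exactly this uniform control of the subspace contributions: as $Q$ varies the rational subspaces on which it is near-isotropic change, and one must bound their aggregate contribution by a single constant depending only on $\mathcal K$, which requires a uniform-in-$\Lambda$ version of the sublattice estimates together with careful uniform bookkeeping of the base-case logarithms through the induction.
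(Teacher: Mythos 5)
Your proposal correctly identifies the general framework (reduce to a compact family of unimodular lattices and a standard form, then invoke the Eskin--Margulis--Mozes machinery), and your volume computation $\asymp BA^{n-2}$ for the region $\{\|w\|\le A,\ |Q_0(w)|<B\}$ is right. But the core of the argument is missing, and it is not quite the part you flag. The paper does \emph{not} proceed by a volume-plus-excess decomposition with an induction over near-isotropic rational subspaces; that is not how \cite{EskinMargulisMozes1998} works either --- their Theorems 3.2 and 3.3, i.e.\ the bounds on $\int_K\alpha(a_tk\Lambda)\,dm(k)$ (which already produce the dichotomy between the signatures $(2,1),(2,2)$ and all others, with and without the logarithm), are proven via a system of integral inequalities on the space of lattices, not by induction on the dimension of the quadratic space. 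You give no indication of how your induction would close, and in particular of how the subspace contributions would be shown to scale at most \emph{linearly} in $B$. Note also that the uniformity in $Q$ over a compact set, which you call ``the genuinely new content,'' is essentially already available in EMM (their constants are uniform over compacta in $g$); the remark following the theorem states that the novelty is the explicit dependence on the target interval, i.e.\ on $B$.

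The missing idea is the following. In the EMM scheme one bounds the count by $c_{p,q}e^{(n-2)t}\int_K\hat f(a_tkg)\,dm(k)$, where $\hat f$ is the Siegel transform of a test function $f$ chosen so that $J_f(r,\xi)\ge 2$ for $r\in[1,2]$ and $\xi\in(-B,B)$, and then applies Schmidt's bound $\hat f\le c_f\,\alpha(\Lambda)$ together with the $\alpha$-integral bounds above. The only place $B$ enters is through the constant $c_f$, so one must exhibit an $f$ covering the target region whose Schmidt constant satisfies $c_f\ll B$. The paper does this by taking $f$ supported on the tube $[0,3]\times[-2,2]^{n-2}\times[-2B,2B]$ in the isotropic coordinates for $Q_0$, with $f=2$ on $[1,2]\times[-1,1]^{n-2}\times[-B,B]$, covering this tube by $2B+1$ balls of bounded radius, and using a translation lemma (Lemma \ref{lem:Lambda}) to bound the number of lattice points in each ball by $c_n\alpha(\Lambda)$, whence $\hat f(g)\le 2c_nB\,\alpha(g\Z^n)$. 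Without this (or an equivalent device) your outline does not yield the stated linear dependence on $B$; as written it is a plan whose hardest step is deferred.
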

\begin{rem}
 The result of \cite[Theorem 2.3]{EskinMargulisMozes1998} gives a similar upper bound to $$\#\{  v\in \Z^n: \|v\|\leq A,\; Q(v)\in (a,b)\}$$ with the bound depending implicitly on the form $Q$ and the target interval $(a,b)$. The novelty of our result is to make the dependence on the target interval explicit.
\end{rem}
\subsection*{Outline}
In Section \ref{sec:EMM}, we prove Theorem \ref{thm:EMM}.
Then in Section \ref{s:m=2} we first focus on the case $m=2$ of Theorem \ref{thm:Eps bounds}; and settle the case $m\ge3$ in Section \ref{s:m=3}.  We then show in Section \ref{s:Eis} how to derive the bounds on the Eisenstein series from Theorem \ref{thm:Eps bounds}. Finally, in Section \ref{sec:PhilSar}, we explicate Remark \ref{rmk:PhilSar}.

\subsection*{Notation}
We use standard Vinogradov notation that $f\ll g$ if there is a constant $C>0$ so that $f(x)\le C g(x)$ for all $x$. When the implied constant depends on more than the lattice $\G$ or $Q$ or $z$ varying in a compact set $\Omega$, which we think of as fixed, we denote this with a subscript.

\subsection*{Acknowledgements}
We thank Valentin Blomer, Jens Marklof, and Amir Mohammadi for insightful discussions,
and the referee for many comments that greatly improved this paper.

\section{Counting estimate}\label{sec:EMM}
In this section we prove Theorem \ref{thm:EMM}. 
 We first recall the main ideas in the proof of  \cite{EskinMargulisMozes1998}. Let $Q$ be a quadratic form of signature $(p,q)$ and let $g\in \SL_n(\R)$ such that $Q(v)=Q_0(gv)$ with 
$$Q_0(x)=x_1x_n+\sum_{i=2}^p x_i^2-\sum_{i={p+1}}^{n-1}x_i^2.$$ 
We let $g$ (and hence $Q$) vary in a compact set, and fix a $\beta>0$ so that 
 $\max\{\|g\|, \|g^{-1}\|\}\leq \beta$ in this compact region. 

Let $H=\SO_{Q_0}(\R)$, let $K=H \cap \SO(n)$ be a maximal compact, and let 
$a_t=\diag(e^{-t},1\ldots,1,,e^t)\in H$.
For any real-valued, compactly supported function $f$ on $\R^n$, any $r>0$, and any $\xi\in \R$, define the function 
$$J_f(r,\xi)=\frac{1}{r^{n-2}}\int_{\R^{n-2}} f(r,x_2,\ldots, x_{n-1},\tfrac{\xi-Q_0(0,x_2,\ldots, x_{n-1},0)}{2r})dx_2\ldots dx_{n-1}.$$
Then by \cite[Lemma 3.6]{EskinMargulisMozes1998}, there is a constant $c_{p,q}$ and $T_0>1$ such that for every $t\geq \log(T_0)$ and any $v\in \R^n$ with $\|v\|>T_0$, we have that
$$\left|J_f(\|v\|e^{-t},Q_0(v))- c_{p,q}e^{(n-2)t}\int_K f(a_tkv)dm(k)\right|\leq 1.$$
In particular, if we choose $f$ so that $J_f(r,\xi)\geq 2$ for all $r\in (1,2)$ and $\xi\in (a,b)$, then for any $v\in \R^n$ with $e^t\leq \|gv\|\leq 2e^t$ and $Q_0(gv)\in (a,b)$, we have that 
$J_f(\|gv\|e^{-t}, Q_0(gv)))\geq 2$, whence
$c_{p,q}e^{(n-2)t}\int_K f(a_tkv)dm(k)\geq 1$ and
\begin{eqnarray}
\nonumber
\#\{v\in \Z^n: Q_0(gv)\in (a,b), \|gv\|\in [e^t,2e^t]\}&\leq& \sum_{v\in \Z^n} c_{p,q} e^{(n-2)t}\int_Kf(a_t kgv)dm(k)\\
\label{eq:fhat}
&=& c_{p,q}e^{(n-2)t}\int_K\hat{f}(a_t kg)dm(k)
\end{eqnarray}
where 
$$\hat{f}(g)=\sum_{0\neq v\in\Z^n}f(gv)$$ 
is the Siegel transform.
Next, using \cite[Lemma 2]{Schmidt1968} we can bound 
\begin{eqnarray}
\label{eq:alpha}
    \hat{f}(g)\leq c_f \alpha(g\Z^n),
\end{eqnarray}
where the function $\alpha(\Lambda)$ is the function on the space of lattices defined in \cite[equation (3.3)]{EskinMargulisMozes1998}, and
 $c_f$ is a constant depending only on $f$. Finally, \cite[Theorem 3.2 and Theorem 3.3]{EskinMargulisMozes1998} state that
 $$\int_K\alpha(a_tkg\Z^n )dm(k)\leq c_g,$$
 is uniformly bounded when $(p,q)\not\in\{ (2,2), (2,1)\}$ and that 
$$\int_K\alpha(a_tkg\Z^n)dm(k)\leq c_g t,$$
when $(p,q)=(2,2)$ or $(p,q)=(2,1)$.
Here the constant $c_g$ is uniform when $g$ is taken from a compact set. Using this we get that  when $(p,q)\not\in \{(2,2), (2,1)\}$
$$\#\{v\in \Z^n: Q_0(gv)\in (a,b), \|gv\|\in [e^t,2e^t]\}\leq c_{p,q}c_fc_g e^{(n-2)t},$$
 and that for $(p,q)\in \{(2,2), (2,1)\}$
$$\#\{v\in \Z^n: Q_0(gv)\in (a,b), \|gv\|\in [e^t,2e^t]\}\leq c_{p,q}c_fc_g te^{(n-2)t}.$$
Summing over $t\leq \log(T)$ in dyadic intervals gives a bound of the form 
$$\#\{v\in \Z^n: Q_0(gv)\in (a,b), \|gv\| \leq T\}\leq c T^{n-2},$$
when  signature $(p,q)\not\in \{(2,2),(2,1)\}$ and 
$$\#\{v\in \Z^n: Q_0(gv)\in (a,b), \|gv\| \leq T\}\leq cT^{n-2}\log(T),$$
otherwise. Here the constant $c$ depends on the signature on the group element $g$, and on the function $f$ (and hence on the interval $(a,b)$.

Up to this point, the proof is identical to the treatment in \cite{EskinMargulisMozes1998}. It is at this moment where we need one simple extra ingredient to make everything uniform, in the special case of a target interval $(-B,B)$. Our goal is to find a suitable function $f$ so that $J_f(r,\xi)\geq 2$ when $r\in [1,2]$ and $|\xi|\leq B$ such that $c_f\leq cB$.
We first note that \cite[Lemma 2]{Schmidt1968} implies that for any $R\geq 1$ and any lattice $\Lambda=g\Z^n$, we have the bound 
$$\#\{v\in \Lambda: \|v\|\leq R\}\leq cR^n\alpha(\Lambda),$$
with $c$ and absolute constant depending only on $n$. We combine this with the following simple observation.
\begin{lem}\label{lem:Lambda}
For any $x\in \R^n$ and any lattice $\Lambda$ we can bound 
$$\#\{v\in\Lambda: \|v-x\|\leq R\}\leq \#\{v\in \Lambda: \|v\|\leq 2R\}.$$
\end{lem}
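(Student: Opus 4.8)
The plan is to exploit the group structure of the lattice via a simple translation argument. The key point is that $\Lambda$ is closed under subtraction, so if I can anchor the ball $B(x,R)$ at an actual lattice point, then recentering will carry the relevant lattice points into a ball of twice the radius about the origin.

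First I would dispose of the trivial case: if $\{v\in\Lambda:\|v-x\|\le R\}$ is empty, the left-hand side is zero and there is nothing to prove. Otherwise I would fix some $v_0\in\Lambda$ with $\|v_0-x\|\le R$ (such a point exists precisely because the set is nonempty), and consider the map $v\mapsto v-v_0$. Since $\Lambda$ is a lattice, $v-v_0\in\Lambda$ whenever $v\in\Lambda$, so this map sends lattice points to lattice points, and it is visibly injective, being a translation.

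It then remains to check that this map lands in the target set. For any $v\in\Lambda$ with $\|v-x\|\le R$, the triangle inequality gives
$$
\|v-v_0\|\ \le\ \|v-x\|+\|x-v_0\|\ \le\ R+R\ =\ 2R,
$$
so $v-v_0$ lies in $\{w\in\Lambda:\|w\|\le 2R\}$. An injection from the first set into the second yields the claimed inequality.

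There is no real obstacle here; the lemma is elementary. The only point requiring a little care is that the recentering must be by an element of $\Lambda$ rather than by $x$ itself (which need not be a lattice point), and this is exactly why I first extract the anchor point $v_0$ from the nonempty case before translating.
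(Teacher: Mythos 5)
Your proof is correct and takes essentially the same approach as the paper: both handle the empty case, fix an anchor point $u=v_0\in\Lambda$ within distance $R$ of $x$, and use the translation $v\mapsto v-v_0$ together with the triangle inequality to inject the first set into the second. Nothing is missing.
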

\begin{proof}
If there is no $v\in \Lambda$ with $\|v-x\|\leq R$, then this is obvious. Otherwise, let $u\in \Lambda$ satisfy $\|u-x\|\leq R$; then $\|v-x\|\leq R$ implies that 
$$\|v-u\|=\|v-x+x-u\|\leq 2R.$$
Since $v\in \Lambda$ if and only if $v-u\in \Lambda$, this concludes the proof.
\end{proof}

We now describe our choice of $f$.  Assume that $B\geq (n-2)$ and let $f$ take values in $[0,2]$ and supported on $[0,3]\times [-2,2]^{n-2}\times [-2B,2B]$ such that $f(x)=2$ on $[1,2]\times [-1,1]^{n-2}\times [-B,B]$.  Note that for any $r\in [1,2]$ and $\xi\in [-B,B]$ and $(x_2,\ldots,x_{n-2})\in [-1,1]^{n-2}$ we have that $\tfrac{\xi-Q_0(0,x_2,\ldots, x_{n-1},0)}{2r}\in [-B,B]$ so that 
$$f(r,x_2,\ldots, x_{n-1},\tfrac{\xi-Q_0(0,x_2,\ldots, x_{n-1},0)}{2r})=2.$$
We thus get a lower bound 
$J_f(r,\xi)\geq \frac{2}{2^{n-2}}\int_{[-1,1]^{n-2}}dx_2\ldots dx_{n-1}=2 $.
On the other hand, we can cover the support of $f$ by $2B+1$ balls of radius $R_n=2\sqrt{n}$ centered at the points $v_j=(1,0,\ldots, 0,2j)$ with $-B\leq j\leq B$.
From Lemma \ref{lem:Lambda}, we can bound 
$$\#\{v\in \Lambda: \|v-v_j\|\leq R_n\}\leq \#(\Lambda\cap 2R_n\}\leq c_n\alpha(\Lambda),$$
where the constant depends only on $n$. Using this, we can bound the Siegel transform 
$\hat{f}(g)\leq 2c_nB \alpha(\Lambda)$.
Plugging this estimate into \eqref{eq:alpha} and \eqref{eq:fhat} concludes the proof of Theorem \ref{thm:EMM}.


\section{Bounds on the Epstein zeta function}
\subsection{Bounds for $m=2$} \label{s:m=2}
In this section we fix $m=2$ and prove \eqref{eps m=2}. In \cite{SavastruVarbanets2005}, the authors considered the case of an integral quadratic form $Q$ and proved an approximate functional equation as well as a formula for the mean square. While the formula for the mean square is special for a family of integral quadratic forms, the approximate functional equation \cite[Theorem 1]{SavastruVarbanets2005} holds in general. We record this result in our special case as follows.
\begin{thm}
For $Q$ a positive definite quadratic form of discriminant $D$ with dual form $Q_-$ and sequences $a_n$, $\lambda_n$, $b_n$, $\mu_n$ defined for $\Re(s)\gg1$ by:
$$
Z_Q(s) =
\sum_{\lambda_n} \frac{a_n}{\lambda_n^s}, 
\text{\qquad
and
\qquad}
Z_{Q_-}(s) =
\sum_{\mu_n} \frac{b_n}{\mu_n^s}, 
$$
we have, for $s=\tfrac{1}{2}+it$ with $|t|\geq 1$, that:
    $$Z_Q(s)=\sum_{\lambda_n\leq X}\frac{a_n}{\lambda_n^{s}}+\chi(s)\sum_{\mu_n\leq X}\frac{b_n}{\mu_n^{1-s}}+O_D(\log(|t|)),$$
where
$$
\chi(s)=(\frac{\sqrt{D}}{\pi})^{1-2s}\frac{\Gamma(1-s)}{\Gamma(s)},
$$
and
$X=X(t):= \frac{|t|\sqrt{D}}{\pi}$.
\end{thm}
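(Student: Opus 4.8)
The statement is essentially \cite[Theorem 1]{SavastruVarbanets2005}: although that reference works with integral forms, its derivation of the approximate functional equation uses only the meromorphic continuation, the location of the pole, and the functional equation of $Z_Q$, none of which requires integrality, so one route is simply to quote it. To indicate why it holds I would reconstruct the standard argument. Recall from \cite{Terras1973} that for $m=2$ the function $Z_Q$ continues meromorphically with a single simple pole at $s=1$, of residue $\rho=\rho(Q)$, and satisfies $Z_Q(s)=\chi(s)\,Z_{Q_-}(1-s)$ with $\chi$ as in the statement.

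First I would express the truncated Dirichlet polynomial by a truncated Perron formula at height $U\asymp|t|$: with $c=2$, so that $\Re(s+w)>1$ lies in the half-plane of absolute convergence,
$$
\sum_{\lambda_n\le X}\frac{a_n}{\lambda_n^{s}}=\frac{1}{2\pi i}\int_{c-iU}^{c+iU}Z_Q(s+w)\,\frac{X^{w}}{w}\,dw+O\!\left(\text{truncation}\right),
$$
the truncation error being controlled in the usual way. I would then move the segment to $\Re(w)=-c'$ with $c'>\tfrac12$, picking up the pole of $1/w$ at $w=0$ (residue $Z_Q(s)$) and the pole of $Z_Q(s+w)$ at $w=1-s$ (residue $\rho X^{1-s}/(1-s)$, of modulus $\ll X^{1/2}/|t|\ll|t|^{-1/2}$, hence harmless). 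On the new line $\Re(s+w)<0$ I apply the functional equation $Z_Q(s+w)=\chi(s+w)Z_{Q_-}(1-s-w)$; since $\Re(1-s-w)>1$ there, I expand $Z_{Q_-}$ as an absolutely convergent Dirichlet series and exchange sum and integral, reducing the problem to evaluating, for each $n$,
$$
\frac{1}{2\pi i}\int_{(-c')}\chi(s+w)\,\frac{(X\mu_n)^{w}}{w}\,dw .
$$

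The evaluation of this last integral is the heart of the matter and is governed by Stirling's formula. Logarithmic differentiation gives $(\log\chi)'(s)=-2\log(\sqrt D/\pi)-\psi(s)-\psi(1-s)$, and since $\psi(s)+\psi(1-s)=2\log|t|+O(|t|^{-1})$ on $s=\tfrac12+it$, one obtains
$$
\chi(s+w)=\chi(s)\,X^{-2w}\Bigl(1+O\bigl(|w|^{2}/|t|\bigr)\Bigr),\qquad X=\frac{|t|\sqrt D}{\pi},
$$
for $w$ in a bounded region. Substituting the main term $\chi(s)X^{-2w}$ turns the integrand into $\chi(s)(\mu_n/X)^{w}/w$, whose integral over $\Re(w)=-c'<0$ equals $0$ for $\mu_n>X$ and $-1$ for $\mu_n<X$; after collecting signs this reproduces exactly $\chi(s)\sum_{\mu_n\le X}b_n\mu_n^{-(1-s)}$, leaving $Z_Q(s)$ as the extracted main term. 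The main obstacle is to show that everything discarded contributes only $O_D(\log|t|)$: namely the horizontal segments and the tail of the contour beyond height $U$ (which force the choice $U\asymp|t|$, since $\chi$ grows on vertical lines to the left and the bare kernel $1/w$ does not by itself give absolute convergence), the Stirling remainder $O(|w|^2/|t|)$ integrated against the dual series, and above all the transition terms with $\mu_n$ within a bounded factor of $X$, where the sharp cutoff is least accurate. These are controlled using the count $\sum_{\mu_n\le Y}b_n=\#\{v\ne0:Q_-(v)\le Y\}\ll Y$, which is exactly the kind of uniform lattice-point estimate available for a binary form. Finally, all implied constants are uniform as $Q$ ranges over a compact set of positive-definite forms, since $D$ and $\rho(Q)$ vary continuously and boundedly and the count $\sum_{\mu_n\le Y}b_n\ll Y$ for $Q_-$ is uniform on compacta.
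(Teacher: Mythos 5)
Your proposal matches the paper's approach exactly: the paper gives no proof of its own, but simply quotes \cite[Theorem 1]{SavastruVarbanets2005} with the observation that, although that reference treats integral forms, the approximate functional equation holds for general positive-definite binary forms. Your additional reconstruction via truncated Perron, the functional equation, and Stirling's approximation of $\chi(s+w)$ is the standard derivation and is a correct (if optional) supplement to what the paper actually records.
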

Noting that $|\chi(s)|=1$ for $s=\tfrac{1}{2}+it$, we can estimate
\begin{eqnarray}\label{eq:ZQsq}
|Z_Q(\frac{1}{2}+it)|^2\ll 
F_Q(t)
+
F_{Q_-}(t)
+O(\log^2(t)),    
\end{eqnarray}
where we have set
$$
F_Q(t) :=
\left|\sum_{\lambda_n\leq X(t)}\frac{a_n}{\lambda_n^{\frac{1}{2}+it}}\right|^2.
$$
Recall again that $X(t)=\frac{|t|\sqrt{D}}{\pi}\asymp |t|$.

Integrating \eqref{eq:ZQsq} gives:
\begin{eqnarray}\label{eq:ZQbnd}
\int_{T}^{2T}|Z_Q(\tfrac{1}{2}+it)|^2dt&\ll&
\int_{T}^{2T} F_Q(t) dt+
\int_{T}^{2T} F_{Q_-}(t) dt+O(T\log^2(T)).
\end{eqnarray}
These terms can be estimated as:
\begin{eqnarray*}
\int_{T}^{2T} F_Q(t) dt &=& \int_{T}^{2T} \mathop{\sum_{u,v\in \Z^2\setminus0}}_{Q(u) ,Q(v) \leq X(t)} \frac{1}{ Q(u)^{1/2+it}Q(v)^{1/2-it}}dt \\
&\ll & \mathop{\sum_{u,v\in \Z^2\setminus0}}_{\|v\| ,\|u\| \ll \sqrt{T}} \frac{1}{ \|u\| \|v\|} \left|\int_{\max\{Q(u),Q(v),T\}}^{2T}e^{it\log(\frac{Q(v)}{Q(u)})}dt\right|,
\end{eqnarray*}
where we used that $Q(u)\asymp \|u\|^2$.

We now break this into different regions depending on the ratio of $\frac{Q(u)}{Q(v)}$. First consider the range when $|\log(\frac{Q(u)}{Q(v)})|\geq 1$. In this range, we can bound the inner integral by $2$ to get a bound of 
$$ \mathop{\sum_{u,v\in \Z^2}}_{\|v\| ,\|u\| \ll \sqrt{T}} \frac{1}{ \|u\| \|v\|} \ll  \left(\mathop{\sum_{v\in \Z^2}}_{\|v\|  \ll \sqrt{T}} \frac{1}{\|v\|} \right)^2\ll T.$$
For the rest, we have that $Q(u)\asymp Q(v)$ so also $\|u\|\asymp \|v\|$, and we break the sum into dyadic intervals
$A\leq \|v\|\leq 2A$ and $\frac{B}{T}\leq |\log(\frac{Q(u)}{Q(v)})|\leq \frac{2B}{T}$, with $A\leq \sqrt{T}$ and $B\leq T$.
The contribution of each such dyadic interval is then given by  
\begin{eqnarray*}
N_T(A,B)&=& \mathop{\sum_{v\in \Z^2}}_{A\leq\|v\|\leq 2A} \mathop{\sum_{u\in \Z^2}}_{ |\log(\frac{Q(u)}{Q(v)})|\in (\frac{B}{T},\frac{2B}{T})}\frac{1}{ \|u\| \|v\|} \left|\int_{\max\{Q(u),Q(v),T\}}^{2T}e^{it\log(\frac{Q(u)}{Q(v)})}dt\right|\\
&\ll& \frac{T}{A^2B}\#\{ u,v\in \Z^2: \|v\|\leq 2A,\; |\frac{Q(u)}{Q(v)}-1|\leq \frac{2B}{T}\}\\
 &\ll& \frac{T}{A^2B}\#\{ (u,v)\in \Z^4: \|(u,v)\|\leq 2A,\; |Q(u)-Q(v)|\ll \frac{A^2B}{T}\}.\\
\end{eqnarray*}

We also have the range where  $|\log(\frac{Q(u)}{Q(v)})|\leq \frac{1}{T}$ that we can similarly bound by 
\begin{eqnarray*}
N_T(A)&=& \mathop{\sum_{v\in \Z^2}}_{A\leq\|v\|\leq 2A} \mathop{\sum_{u\in \Z^2}}_{ |\log(\frac{Q(u)}{Q(v)})|\in [0,\frac{1}{T})}\frac{1}{ \|u\| \|v\|} \left|\int_{\max\{Q(u),Q(v),T\}}^{2T}e^{it\log(\frac{Q(u)}{Q(v)})}dt\right|\\
 &\ll& \frac{T}{A^2}\#\{ (u,v)\in \Z^4: \|(u,v)\|\leq 2A,\; |Q(u)-Q(v)|\ll 1\}.
\end{eqnarray*}

Note that for $Q(v)$ a positive definite binary quadratic form, the form 
$$\tilde{Q}(u,v)=Q(u)-Q(v)
$$ 
has signature $(2,2)$. 
Applying Theorem \ref{thm:EMM} gives
$$\#\{  v\in \Z^4: \|v\|\leq A,\; \tilde Q(v)\in (-B,B)\}\leq c BA^2\log(A),$$
from which  we can bound 
$$N_T(A,B)\ll A^2\log(A)\ll A^2\log(T)$$ 
for $A\leq \sqrt{T}$. 
We also have the bound
$N_T(A)\leq cT\log(T)$ for all $A\leq \sqrt{T}$.
Taking $A,B$ to be powers of $2$ and summing over $A\leq \sqrt{T}, B\leq T$, we get the bound 
$$\int_{T}^{2T}|Z_Q(\tfrac{1}{2}+it)|^2dt\ll T\log^2(T),$$
as claimed in \eqref{eps m=2}.

\subsection{Bounds for $m\ge 3$}\label{s:m=3}

Turning now to \eqref{e:Zeta m} for $m\ge 3$, we follow the same approach, however instead of using the approximate functional equation of \cite{SavastruVarbanets2005}, we instead use the following.
\begin{thm}[{\cite[(2.2)]{Blomer2020}}]
Recalling that $Q_-$ is the dual form to $Q=Q_+$,
we have  that
\begin{align}
    Z_Q(\frac m4+it) \ll 1+ |t|^\varepsilon \sum_{\pm} \sum_{A} \int_{|w|\le|t|^\varepsilon} A^{-m/4} \left| \sum_{v\neq 0} \frac{V_A(Q_{\pm}(v))}{Q_{\pm}(v)^{\pm i t + iw}} \right| dw
\end{align}
where the sum over $A$ ranges over powers of $2$ less than $|t|^{1+\varepsilon}$, and $V_A$ is bounded and has compact support in $[A,3A]$. 
\end{thm}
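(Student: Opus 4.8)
The plan is to derive the stated formula as an approximate functional equation (AFE) for the Epstein zeta function, followed by a dyadic decomposition and a contour-integral extraction of the $w$-variable. Throughout I would write $\gamma(s)=\pi^{-s}\Gamma(s)$, so that the completed function $\Lambda_Q(s)=\gamma(s)Z_Q(s)$ is entire apart from simple poles at $s=0$ and $s=\tfrac m2$, and satisfies the functional equation $\Lambda_Q(s)=D^{-1/2}\Lambda_{Q_-}(\tfrac m2-s)$ recorded in the excerpt ($D$ the discriminant of $Q$). I would fix once and for all an even entire weight $G$ with $G(0)=1$ and super-exponential decay on vertical lines, e.g.\ $G(u)=e^{u^2}$, and set $s=\tfrac m4+it$.

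First I would establish the AFE. Consider $I(s)=\frac{1}{2\pi i}\int_{(3)}\frac{\Lambda_Q(s+u)}{\gamma(s)}G(u)\frac{du}{u}$. Expanding $\Lambda_Q(s+u)=\gamma(s+u)\sum_v Q(v)^{-s-u}$ on $\Re u=3$ gives the convergent direct sum $\sum_{v\ne0}Q(v)^{-s}V_s(Q(v))$, where $V_s(\xi)=\frac{1}{2\pi i}\int_{(3)}\frac{\gamma(s+u)}{\gamma(s)}G(u)\xi^{-u}\frac{du}{u}$. Shifting to $\Re u=-3$, the pole at $u=0$ contributes the residue $Z_Q(s)$, while the poles of $\Lambda_Q(s+u)$ at $s+u\in\{0,\tfrac m2\}$ contribute $O(e^{-ct^2})$ by the decay of $G$ at $u=-s$ and $u=\tfrac m2-s$; applying the functional equation on the shifted line and re-expanding produces a dual sum in $Q_-$. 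This yields $Z_Q(s)=\sum_v Q(v)^{-s}V_s(Q(v))+\frac{\gamma(m/2-s)}{\gamma(s)}D^{-1/2}\sum_v Q_-(v)^{-(m/2-s)}V^*_{m/2-s}(Q_-(v))+O(1)$, with $|\gamma(\tfrac m2-s)/\gamma(s)|=1$ on the critical line. A routine contour analysis of $V_s$ (shift right for large $\xi$, left past $u=0$ for small $\xi$, using $\gamma(s+u)/\gamma(s)\asymp(|t|/\pi)^{u}$ from Stirling) shows $V_s(\xi)\approx1$ for $\xi\ll|t|$ and $V_s(\xi)\ll_B(|t|/\xi)^B$ for $\xi\gg|t|$, so both weights localize $Q_\pm(v)$ to size $\ll|t|^{1+\varepsilon}$.

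Next I would insert a smooth dyadic partition $1=\sum_A\psi_A$ ($A$ a power of $2$, $\psi_A$ supported in $[A,3A]$) into each sum. Blocks with $A>|t|^{1+\varepsilon}$ contribute only $O(1)$ in total by the rapid decay of $V_s$, and are absorbed into the leading $1$. For a surviving block the crucial step is the extraction of the $w$-integral: I would reinsert the definition of $V_s$ but on the contour $\Re u=\delta$ for small fixed $\delta>0$ (to the right of the pole $u=0$, so no residue is incurred and the termwise interchange is legitimate because $\psi_A$ truncates the sum). Writing $u=\delta+iw$ and pulling out the common size $A^{-m/4}$ (absorbing $Q(v)^{-m/4-\delta}/A^{-m/4}\in[3^{-m/4-\delta},1]$ into a fixed bounded bump $V_A$ supported in $[A,3A]$) produces exactly the phase $Q(v)^{-it-iw}$. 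Taking absolute values, the coefficient $\big|\frac{\gamma(s+\delta+iw)}{\gamma(s)}G(\delta+iw)(\delta+iw)^{-1}\big|\ll|t|^{C\delta}e^{-w^2/2}$ by Stirling and the Gaussian decay of $G$; choosing $\delta=\varepsilon/C$ bounds it by $|t|^{\varepsilon}e^{-w^2/2}$, which both restricts the essential range to $|w|\le|t|^{\varepsilon}$ (the tail being $\ll|t|^{O(1)}e^{-|t|^{2\varepsilon}}\ll1$, via the trivial count $\#\{v:Q(v)\in[A,3A]\}\ll A^{m/2}$) and supplies the factor $A^{-m/4}$. The dual sum is handled identically with $Q\mapsto Q_-$, $s\mapsto\tfrac m2-s$, giving the ``$-$'' terms with phase $Q_-(v)^{it-iw}$. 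Summing over the $O(\log|t|)\ll|t|^{\varepsilon}$ surviving blocks and over $\pm$, and collecting the $O(1)$ errors, yields the claimed bound.

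The main obstacle I anticipate is the per-block extraction with fully uniform control. One must check that shifting $V_s$ onto $\Re u=\delta$ crosses no pole of $\gamma(s+u)$ (these lie at $\Re u=-\tfrac m4-k<0$), that the normalized bump $V_A$ is genuinely $t$-independent, bounded, and supported in $[A,3A]$, and—most delicately—that every implied constant (in the Stirling bounds on $\gamma(s+\delta+iw)/\gamma(s)$, in the localization of $V_s$, and in the tail estimates) is uniform as $Q$ ranges over a compact set of positive-definite forms, which is what feeds the uniformity asserted in Theorem~\ref{thm:Eps bounds}. The remaining ingredients—the AFE derivation, the negligibility of the polar residues, and the dyadic bookkeeping—are standard.
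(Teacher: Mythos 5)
The paper does not prove this statement at all: it is quoted verbatim from Blomer \cite[(2.2)]{Blomer2020}, so there is no internal proof to compare against. Your derivation---completed functional equation for $Z_Q$, a $G(u)/u$-weighted contour integral yielding the approximate functional equation with negligible polar residues at $s+u\in\{0,\tfrac m2\}$, dyadic truncation at $|t|^{1+\varepsilon}$, and re-expansion of the weight on $\Re u=\delta$ to extract the $w$-integral, the $A^{-m/4}$ factor, and a $t$-independent bounded $V_A$ supported in $[A,3A]$---is correct (the sign convention on $w$ matches after $w\mapsto -w$, and uniformity over $Q$ in a compact set follows since all constants come from Stirling and the lattice-point count $\#\{v:Q(v)\le X\}\ll X^{m/2}$), and it is essentially the standard argument by which Blomer himself obtains (2.2), so nothing further is needed.
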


Now consider the $L^2$ norm and expand the square using Cauchy-Schwarz to get that
\begin{align*}
    &\int_{T}^{2T} |Z_Q(\frac m 4+it)|^2 d t \ll \int_{T}^{2T} \left|1+t^\epsilon \sum_{\pm} \sum_{A} \int_{|w|\le|t|^\varepsilon} A^{-m/4} 
    \left| \sum_{v\neq 0} \frac{V_A(Q_{\pm}(v))}{Q_{\pm}(v)^{\pm i t + iw}} \right| dw\right|^2 dt\\
    &\ll \int_{T}^{2T}\left(1+t^\epsilon \sum_{\pm}\sum_{A\leq t^{1+\epsilon}\atop\text{dyadic}}\int_{|w|\leq t^\epsilon} A^{-m/2}  \left| \sum_{v\neq 0} \frac{V_A(Q_{\pm}(v))}{Q_{\pm}(v)^{\pm i t + iw}} \right|^2 \right) dt\\
    &\ll T+T^\varepsilon  \sum_{\pm}\sum_{A\leq T^{1+\epsilon}\atop\text{dyadic}} A^{-m/2}\int_{|w|\leq T^{\epsilon}}  \int_{\max\{T,A,|w|^{1/\epsilon}\}}^{2T} \left| \sum_{v\neq 0} \frac{V_A(Q_{\pm}(v))}{Q_{\pm}(v)^{\pm i t + iw}} \right|^2  dt.
\end{align*}
For any fixed $A$ and $w$ in this range and for each of the choices of $Q=Q_+$ or $Q=Q_-$,
we open the square and  estimate the integral 
\begin{align*}
\int_{\max\{T,A,|w|^{1/\epsilon}\}}^{2T} \left| \sum_{v\neq 0} \frac{V_A(Q(v))}{Q(v)^{ i t + iw}} \right|^2  dt
\ll  
\sum_{v,u\neq 0} V_A(Q(v))V_A(Q(u)) 
\left|
\int_{\max\{T,A,|w|^{1/\epsilon}\}-w}^{2T-w}e^{it\log\frac{Q(v)}{Q(u)}}  dt
\right|
\end{align*}
depending on the range of $\frac{Q(v)}{Q(u)}$.


If $|\log(\frac{Q(v)}{Q(u)})| \ge 1$, the integral is bounded independently on the boundaries of integration. Since $V_A(Q(v))$ is bounded and supported on $Q(u)\leq 3A$, the sum over 
$\|u\| \ll Q(u)^{1/2} \ll A^{1/2}$ and $\|v\| \ll A^{1/2}$ is bounded by $O(A^m)$.
In the remaining case we may assume that $\| v\| \asymp \| u\| \asymp A^{1/2}$. Now further break the summation by taking $\frac{B}{T} \le |\log(\frac{Q(v)}{Q(u)})| \le \frac{2B}{T}$, with $B\ll T$ dyadic. For each $B\geq 1$ we get a term of the form 
\begin{align*}
    N_T(A,B) &= \sum_{\substack{v \in \Z^m \\ \sqrt{A} \le \|v\| \le \sqrt{3A}  }} \sum_{\substack{u \in \Z^m \\ |\log(\frac{Q(v)}{Q(u)})|\in (\frac{B}{T},\frac{2B}{T})    }}\left|  \int_{\max\{T,A,|w|^{1/\epsilon}\}-w}^{2T-w} e^{i t\log(\frac{Q(v)}{Q(u)})}    dt\right|\\
    & \ll \frac{T}{B}  \# \{ (u,v) \in \Z^{2m}  :  \|(u,v)\| \le \sqrt{2 A}, \ |Q(u) - Q(v)| \ll \frac{A B}{T} \}
\end{align*}
and for $B\leq 1$ we let
\begin{align*}
    N_T(A) &= \sum_{\substack{v \in \Z^m \\ \sqrt{A} \le \|v\| \le \sqrt{3A}  }} \sum_{\substack{u \in \Z^m \\ |\log(\frac{Q(v)}{Q(u)})|\in (0,\frac{1}{T})    }}\left| \int_{\max\{T,A,|w|^{1/\epsilon}\}-w}^{2T-w} e^{i t \log(\frac{Q(v)}{Q(u)})}    dt\right|\\
    & \ll T  \# \{ (u,v) \in \Z^{2m}  :  \|(u,v)\| \le \sqrt{2 A}, \ |Q(u) - Q(v)| \ll 1 \}.
\end{align*}
Again following our approach for $m=2$, we note that $\tilde{Q}(u,v):=Q(u)-Q(v)$ is a form of signature $(m,m)$, and apply Theorem \ref{thm:EMM}. It follows that 
\begin{equation}
  N_T(A,B) \ll \frac{T}{B}  A^{m-1} \frac{A B}{T} = A^{m}.
\end{equation}
Similarly, we can trivially bound $N_T(A)$ by $A^m$. 

Plugging these bounds back, bounding the integral of $w$ by $O(T^{\epsilon})$ and summing over $A,B$ powers of $2$ we get the bound
\begin{equation*}
    \int_{T}^{2T} |Z_Q(1+it)|^2 d t \ll T+ T^\varepsilon \sum_{A\leq T^{1+\epsilon}\atop\text{dyadic}}
    A^{-m/2}\left(A^m+N_T(A)+\sum_{B\ll T\atop\text{dyadic}} N_T(A,B)  \right)\ll T^{m/2+\varepsilon},
\end{equation*}
as claimed in \eqref{e:Zeta m}.

\section{Bounds on  Eisenstein series: Proof of Theorems \ref{thm:1} and \ref{thm:1b}} \label{s:Eis}




To move from bounds on the Epstein zeta function to the Eisenstein series, 
argue as follows.
First note that 
for $\Gamma=\SL_2(\Z)$ we have the following well known identity (for $\Re s>1$):
$$\zeta(2s)E_\Gamma(s,z)=y^s\sum_{(c,d)\neq (0,0)}  \frac{1}{((x^2+y^2)c^2+2xcd+d^2)^{s}} =y^sZ_Q(s),$$
where $Q(c,d)=Q_z(c,d)=(x^2+y^2)c^2+2xcd+d^2$ and
$$Z_Q(s)=\sum_{v\in \Z^2\setminus 0} Q(v)^{-s}.$$ 
Applying \eqref{eps m=2}
and the well-known bound $\zeta(1+2it)\gg \log(t)^{-1}$, we conclude \eqref{eq:thm1a}.
This completes the proof of Theorem \ref{thm:1}.
\\

Next we show that for $\Gamma=\SL_2(\Z[i])$ we can again write $E_\Gamma(s,z)$ in terms of an Epstein zeta function. Using the  upper-half-space model of hyperbolic $3$-space, 
$$
\bH^3=\{z= x_1 + ix_2 + j y \ : \ x_j\in\R, y>0\},
$$
the Eisenstein series is defined (for $\Re s>2$) by
$$E_\Gamma(s,z)=\sum_{\substack{c,d \in \Z[i]\\ \text{co-prime}}}\frac{y^s}{N(cz+d)^{s}},$$
where 
$N(z)=x_1^2+x_2^2+y^2$ denotes the norm on the quaternions. Since the norm is multiplicative, if we write
\begin{align*}
  \zeta_{\Z[i]}(s)=\sum_{\alpha \in \Z[i]} \frac{1}{N(\alpha)^{s}},
\end{align*}
then we can simplify the Eisenstein series to
\begin{align*}
  \frac{1}{4}\zeta_{\Z[i]}(s)E(s,z) = \sum_{\substack{(c,d)\neq (0,0)}}\frac{y^s}{N(cz+d)^{s}}.
\end{align*}
Moreover, we can write $\zeta_{\Z[i]}(s) = 4 \zeta(s) L(s,\chi_1)$ with $\chi_1$ the quadratic Dirichlet character modulo $4$. Now if we expand the norm on the right hand side, we arrive at
\begin{align*}
  \zeta(s) L(s,\chi_1)E(s,z) = y^s \sum_{\substack{(c,d)\neq (0,0)}}\frac{1}{Q_z(c,d)^{s}},
\end{align*}
where
\begin{align*}
  Q_z(c,d) &= N(z) c_1^2 + N(z) c_2^2+d_1^2+d_2^2+2(x_1c_1d_1-x_2c_2d_1+x_1c_2d_2 +  x_2 c_1d_2)
\end{align*}
is a positive definite quaternary quadratic form.
Again we have good control on $\zeta(s)$ for $s=1+it$ and on $L(s,\chi_1)$. Thus the problem reduces to estimating the Epstein zeta function
\begin{align*}
  Z_{Q_z}(s)=\sum_{v\in \Z^4\setminus 0} Q_z(v)^{-s},
\end{align*}
and \eqref{eq:thm1b} follows easily from Theorem \ref{thm:Blomer}.
This completes the proof of Theorem \ref{thm:1b}.

\section{Sharpness}\label{sec:PhilSar}
We now consider the case of a general non-arithmetic lattice and show that any subconvex bound for $\nu_2(\Gamma)$ implies the existance of infinitely many Maass cusp forms. Explicitly we show the following.
\begin{thm}\label{thm:PhilSar}
Let  $\Gamma\leq \PSL_2(\R)$ be a non uniform lattice and assume that there is some $z_0$ such that  $\nu_2(\Gamma,z_0)<1/2$. Then there are infinitely many Maass cusp forms  $\vf_j\in L^2(\Gamma \bk \bH)$  with $\triangle\vf_j+\lambda_j\vf_j=0$. Moreover, they satisfy the local Weyl law: for any test function $h(r)$ with Fourier transform smooth and compactly supported, for all sufficiently large $T$,
\begin{eqnarray*}
\sum_{j} h(\frac{r_j}{T})|\vf_j(z)|^2
&=&T^2 \frac{|\Gamma_z|}{2\pi}\int_0^\infty h(r)rdr +O_{\delta,h}(T^{2-\delta})\\
\end{eqnarray*}
    for some $\delta>0$, where $\Gamma_z=\{\gamma\in \Gamma: \gamma z=z\}$ and $\{\vf_j\}_{j\in \N}$ form
    an orthonormal system of 
    Maass forms in $L^2(\G\bk \bH)$ with eigenvalue parametrized by $\lambda_j=\tfrac{1}{4}+r_j^2$.
\end{thm}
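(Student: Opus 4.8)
The plan is to establish the spectral identity connecting a local Weyl-type counting function to Eisenstein series via the pre-trace formula, and then use the hypothesis $\nu_2(\Gamma,z_0)<1/2$ to force the continuous-spectrum contribution to be genuinely smaller than the main term, leaving infinitely many cusp forms to account for the remainder. I would begin by writing down the pre-trace formula for $\Gamma\backslash\bH$ applied at the point $z=z_0$: for a test function $h$ as in the statement, with geometric side given by a sum over $\Gamma$-orbits of the point-pair invariant $k(z_0,\gamma z_0)$, the spectral side reads
\begin{equation*}
\sum_j h(r_j)|\vf_j(z_0)|^2 + \frac{1}{4\pi}\sum_\xi \int_{-\infty}^\infty h(r)\,|E_{\Gamma,\xi}(\tfrac12+ir,z_0)|^2\,dr = \sum_{\gamma\in\Gamma} k(z_0,\gamma z_0).
\end{equation*}
Rescaling $h(r)\mapsto h(r/T)$, the geometric side is dominated by the identity and elliptic contributions; a standard lattice-point count shows the leading term is $T^2\tfrac{|\Gamma_{z_0}|}{2\pi}\int_0^\infty h(r)r\,dr$, with a power-saving error coming from the hyperbolic terms (this is where the smoothness and compact support of $\hat h$ are used, to localize the geometric side and extract the error exponent $\delta$).

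The decisive step is to bound the Eisenstein (continuous spectrum) contribution. After rescaling, this term is
\begin{equation*}
\frac{1}{4\pi}\sum_\xi \int_{-\infty}^\infty h\!\left(\frac rT\right)|E_{\Gamma,\xi}(\tfrac12+ir,z_0)|^2\,dr,
\end{equation*}
and since $h(r/T)$ is supported on $|r|\ll T$, I would bound this by $\sup_\xi \int_{-cT}^{cT}|E_{\Gamma,\xi}(\tfrac12+ir,z_0)|^2\,dr$ up to constants. By the definition of $\nu_2(\Gamma,z_0)$, for any $\nu>\nu_2(\Gamma,z_0)$ this integral is $\ll T^{1+2\nu}$; choosing $\nu$ strictly between $\nu_2(\Gamma,z_0)$ and $1/2$ (possible precisely because of the subconvexity hypothesis) yields a bound $\ll T^{1+2\nu}=o(T^2)$. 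Hence the continuous spectrum is asymptotically negligible compared to the main term $\asymp T^2$, and the local Weyl law
\begin{equation*}
\sum_j h\!\left(\frac{r_j}{T}\right)|\vf_j(z_0)|^2 = T^2\,\frac{|\Gamma_{z_0}|}{2\pi}\int_0^\infty h(r)r\,dr + O_{\delta,h}(T^{2-\delta})
\end{equation*}
follows, with $\delta$ the minimum of the geometric error exponent and $2-(1+2\nu)=1-2\nu>0$.

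Finally, I would deduce infinitude of cusp forms: if there were only finitely many $\vf_j$, the discrete sum $\sum_j h(r_j/T)|\vf_j(z_0)|^2$ would be bounded in $T$ (each $h(r_j/T)\to h(0)$ and there are finitely many terms), contradicting the $T^2$ growth of the main term, provided $|\Gamma_{z_0}|\int_0^\infty h(r)r\,dr>0$, which holds for a suitable nonnegative $h$. The main obstacle I anticipate is not the cusp-form count itself but controlling the geometric side with an explicit power saving: one must verify that the rescaled point-pair invariant sum over hyperbolic $\gamma$ contributes only $O(T^{2-\delta})$, which requires the Fourier-analytic input that $\hat h$ is smooth and compactly supported so that $h(r/T)$ concentrates dyadically and the off-diagonal lattice points $\gamma z_0$ at hyperbolic distance $\gg 1$ are suppressed. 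A secondary technical point is ensuring the identity holds at a single fixed $z_0$ with the factor $|\Gamma_{z_0}|$ correctly accounting for the stabilizer, which is handled by the standard unfolding of the automorphic kernel at $z_0$.
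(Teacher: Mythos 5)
Your proposal is correct and follows essentially the same route as the paper: pre-trace formula at $z_0$, the subconvexity hypothesis $\nu_2(\Gamma,z_0)<1/2$ to make the continuous-spectrum term $O(T^{1+2\nu})=o(T^2)$, and positivity to force infinitely many cusp forms. Two small remarks: the ``main obstacle'' you anticipate on the geometric side does not arise --- since $\hat h$ is supported in $(-1,1)$, the rescaled kernel $k_T$ vanishes off the set $d(z,w)\le 1/T$, so for $T\ge T_0(z_0)$ every non-stabilizer term vanishes identically and the geometric side is exactly $|\Gamma_{z_0}|k_T(0)$, with no hyperbolic contribution to estimate; and $h(r/T)$ is not compactly supported (only $\hat h$ is), so the truncation to $|r|\ll T$ in the Eisenstein term should instead be justified by the rapid decay $h(r)\ll_\ell |r|^{-\ell}$ and a dyadic (or length-$T$ block) decomposition of the $r$-integral, as the paper does.
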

\begin{proof}We recall some well known results on the pre-trace formula and refer to \cite{Hejhal1976} for more details.
Given a point pair invariant $k(z,w)=k(\sinh^2(d(z,w)))$ with $d(z,w)$ the hyperbolic distance and $k\in C^\infty_c(\R^+)$, its spherical transform is defined as  $H(s)=\int_{\bH^2} k(z,i)\Im(z)^sd\mu(z)$. By \cite[Proposition 4.1]{Hejhal1976} the point pair invariant can be recovered from $H(s)$ as follows : Let $h(r)=H(\tfrac{1}{2}+ir)$ and let $g(u)=\frac{1}{2\pi}\int_{-\infty}^\infty h(r)e^{-iru}dr$ denote its Fourier transform, then, defining the auxiliary function $Q\in C^\infty_c(\R^+)$ by $g(u)=Q(\sinh^2(\frac{u}{2}))$ we have that $k(t)=-\frac{1}{\pi}\int_t^\infty \frac{dQ(r)}{\sqrt{r-t}}$. We also recall that 
$k(0)=\frac{1}{2\pi}\int_0^\infty h(r)r\tanh(\pi r)dr$ (see \cite[Proposition 6.4]{Hejhal1976}).

Given any such point pair invariant we have the pre-trace formula 
$$\sum_{\gamma\in\Gamma} k(z,\gamma z)=\sum_{j} h(r_j)|\vf_j(z)|^2+\sum_{i=1}^\kappa\frac{1}{2\pi}\int_\R h(r)| E_{\Gamma,\xi_i}(\tfrac{1}{2}+ir,z)|^2dr,$$
where $\xi_1,\ldots,\xi_\kappa$ are the cusps of $\Gamma$. 

Now, fix a smooth compactly supported function $g(u)\in C^\infty_c((-1,1))$ and for any $T\geq 1$ let $g_T(u)=Tg(Tu)$ so that $h_T(r)=h(\frac{r}{T})$ and $k_T(z,w)$ the corresponding point pair invariant. Since $g_T(u)$ is supported on $(-\frac{1}{T},\frac{1}{T})$ the point pair invariant $k(z,w)$ is supported on the set 
$\{(z,w)| d(z,w)\leq \frac{1}{T}\}$ with $d(z,w)$ the hyperbolic distance. Since $\Gamma$ acts properly discontinuously on $\bH^2$ for any fixed $z$ there is $\delta=\delta(z)$ such that  $d(z,\gamma z)\geq \delta$ for any $\gamma\in \Gamma$ with $\gamma z\neq z$. In particular taking $T_0\geq \frac{1}{\delta(z)}$ for any $T\geq T_0$ we have that 
$k_T(z,\gamma z)=0$ if $\gamma z\neq z$. Hence for any $T\geq T_0$ we have 
\begin{eqnarray*}
\sum_{j} h(\frac{r_j}{T})|\vf_j(z)|^2+\sum_{i=1}^\kappa\frac{1}{2\pi}\int_\R h(\frac{r}{T})| E_{\Gamma,\xi_i}(\tfrac{1}{2}+ir,z)|^2dr&=& |\Gamma_z|k(0).\\
\end{eqnarray*}

Denote by $\nu_2=\nu_2(\Gamma,z)$ and note that for any $\ell\geq 0$ we can bound 
$h(t)\ll_{\ell,h} |r|^{-\ell}$. 
We can thus bound the contribution of the integrals over Eisenstein series by 
\begin{eqnarray*}\frac{1}{2\pi}\int_\R h(\frac{r}{T})| E_{\Gamma,\xi_i}(\tfrac{1}{2}+ir,z_0)|^2dr&=& \sum_{k\in \Z} \int_{kT}^{(k+1)T}h(\frac{r}{T})| E_{\Gamma,\xi_i}(\tfrac{1}{2}+ir,z)|^2dr\\
\ll_h T^{2\nu_2+1}.
\end{eqnarray*}
On the other hand, since the right hand side is
$$|\Gamma_z| k_T(0)= \frac{|\Gamma_z|}{2\pi}\int_0^\infty h(\frac{r}{T})r\tanh(\pi r)dr= T^2\frac{|\Gamma_z|}{2\pi}\int_0^\infty h(r)rdr+O_h(1),$$
we can conclude that if $\nu_2<\tfrac12$, then for any $\delta\in (0,1-2\nu_2)$ and any $T\geq T_0$ 
\begin{eqnarray*}
\sum_{j} h(\frac{r_j}{T})|\vf_j(z)|^2
&=&T^2 \frac{|\Gamma_z|}{2\pi}\int_0^\infty h(r)rdr +O_{\delta,h}(T^{2-\delta}).\\
\end{eqnarray*}
This completes the proof.

\end{proof}

Reiterating Remark \ref{rmk:PhilSar}, if one believes the Phillips-Sarnak conjecture \cite{PhillipsSarnak1985}, then the convexity $L^2$ bounds on Eisenstein series should be sharp for generic lattices. So  \eqref{eq:thm1a} really is relying heavily on the arithmeticity of $\SL_2(\Z)$.

\bibliographystyle{alpha}

\bibliography{DKbibliog}

\end{document}